\newtheorem{thm}{Theorem}[section]
\newtheorem{prop}[thm]{Proposition}
\newtheorem{cor}[thm]{Corollary}
\newtheorem{lem}[thm]{Lemma}
\newtheorem{thm*}{Theorem}
\theoremstyle{definition}
\newtheorem{dfn}[thm]{Definition}
\newtheorem{ex}[thm]{Example}
\newcommand{\into}{\hookrightarrow}
\newcommand{\onto}{\twoheadrightarrow}
\renewcommand{\SS}{\mathbb{S}}
\newcommand{\NN}{\mathbb N}
\newcommand{\ZZ}{\mathbb Z}
\newcommand{\RR}{\mathbb R}
\newcommand{\V}{\operatorname{V}}
\newcommand{\E}{\operatorname{E}}
\newcommand{\K}{\mathrm K_{3,3}}
\newcommand{\M}{\mathrm M}
\renewcommand{\L}{\mathcal L}
\newcommand{\C}{\mathcal C}
\newlength{\picsize}
\title{Writhe invariants \\of 3-regular spatial graphs}
\author{Stefan Friedl$^1$ \and Tejas Kalelkar$^2$ \and José Pedro Quintanilha$^3$}
\date{$^1$Universität Regensburg\\
	$^2$Indian Institute of Science Education and Research Pune
	$^3$Ruprecht-Karls-Universität Heidelberg\\\bigskip
	\today}
\begin{document}
\maketitle

\begin{abstract}
	We give a necessary condition for two diagrams of 3-regular spatial graphs with the same underlying abstract graph~$G$ to represent isotopic spatial graphs. The test works by reading off the writhes of the knot diagrams coming from a collection of cycles in~$G$ in each diagram, and checking whether the writhe tuples differ by an element in the image of a certain map of $\ZZ$-modules determined by~$G$. We exemplify by using our result to distinguish, for each $n\ge 3$, all elements in a certain infinite family of embeddings of the Möbius ladder~$\M_n$ into~$\RR^3$. We also connect these writhe tuples to a classical invariant of spatial graphs due to Wu and Taniyama.
\end{abstract}

\section{Introduction}

The classical question in knot theory of determining whether two knots or links are isotopic naturally leads to the generalization where we consider instead embeddings of (finite) graphs into~$\RR^3$, that is, spatial graphs. An algorithm for determining whether two spatial graphs are isotopic has recently been published \cite{FMQS24}, but its computational complexity sets it beyond the realm of practical applicability.

One can sometimes obstruct the existence of an isotopy between two given spatial graphs by distinguishing the multisets of links contained in them as unions of cycles; in other words, reducing the problem to one of classical knot theory. However, many examples resist this technique -- consider, for instance, the embeddings of the complete bipartite graph on~$3+3$ vertices~$\K$ depicted in Figure~\ref{fig:mainexample_simplified}.
\begin{figure}[]
	\centering
	\def \svgwidth{0.55\linewidth}
	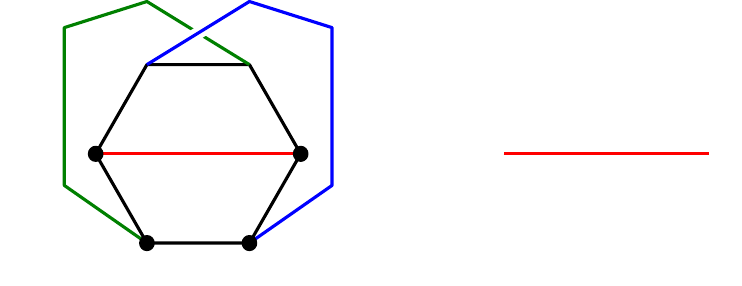
	\caption{Two spatial graphs of type~$\K$.}\label{fig:mainexample_simplified}
\end{figure}
Here, all links that can be found as subgraphs of each embedding are unknots. Moreover, these spatial graphs are mirror images of one another, which typically makes the task of distinguishing them more difficult.

In this note, we give a rather elementary test for comparing $3$-regular spatial graphs (meaning, where all vertices have degree~$3$), which works in particular for distinguishing the pair in Figure~\ref{fig:mainexample_simplified}. The starting point is the notion of the writhe~$w(D)$ of a diagram~$D$ for a knot~$K$, that is, the signed count of crossings in~$D$. It is well-known that~$w(D)$ carries no information about~$K$ -- one can easily produce a diagram of~$K$ with arbitrary writhe. However, given a spatial graph~$\Gamma$, we can fix a diagram~$D$ for~$\Gamma$ and simultaneously consider the writhes of all knot diagrams corresponding to cycles in the abstract underlying graph~$G$ of~$\Gamma$. Our main result constrains the possible values of such a ``writhe tuple''. More precisely, denoting by~$\E(G)$ the edge set of~$G$, we associate to each set~$C$ of cycles in~$G$ a $\ZZ$-linear map 
\[C^* \colon \ZZ^{\E(G)} \to  \ZZ^C,\]
and relate the possible writhe-tuples~$w_C(D)$ of diagrams~$D$ of~$\Gamma$ as follows:

\begin{thm*}[Invariance of ${[w_C(D)]}$]\label{thm:main_intro}
	 Let $\Gamma_1, \Gamma_2$ be $3$-regular spatial graphs with the same underlying abstract graph~$G$, and let $C$~be a set of cycles of~$G$. Let $D_1, D_2$ be diagrams of~$\Gamma_1, \Gamma_2$, respectively, with matching cyclic orders. If $\Gamma_1$ is isotopic to~$\Gamma_2$, then 
\[w_C(D_2) - w_C(D_1) \in  C^*(\ZZ^{\E(G)}).\]
\end{thm*}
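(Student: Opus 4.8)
The plan is to prove invariance by showing that the writhe-tuple $w_C(D)$ changes predictably under isotopy, and that every such change lands in $C^*(\ZZ^{\E(G)})$. Since any two diagrams of isotopic spatial graphs are related by a finite sequence of diagrammatic moves — the generalized Reidemeister moves for spatial graphs (the usual Reidemeister moves R1, R2, R3 away from vertices, plus the vertex-slide/rigid-vertex moves that permit pulling a strand past a vertex while respecting the cyclic order) — it suffices to check the claim move by move. The restriction to \emph{matching cyclic orders} is exactly what guarantees $D_1$ and $D_2$ can be connected by moves that preserve the combinatorial data recorded by $G$ and its cyclic structure, so that $\E(G)$ and the set $C$ of cycles are identified throughout.

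First I would fix, for each edge $e \in \E(G)$, the contribution that an R1 move (adding or removing a kink) performed on the strand of the diagram corresponding to $e$ makes to the writhe of each cycle. The key observation is that a cycle $c \in C$ traverses $e$ either zero or one times (cycles are simple), so an R1 kink on edge $e$ changes $w_c(D)$ by $\pm 1$ precisely for those cycles $c$ that contain $e$, and by $0$ otherwise. This is exactly the statement that the resulting change in the writhe-tuple is $\pm C^*(\mathbf{1}_e)$, where $\mathbf{1}_e$ is the $e$-th standard basis vector of $\ZZ^{\E(G)}$ and $C^*$ is the incidence-type map sending $\mathbf{1}_e$ to the indicator of which cycles use $e$. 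Thus every R1 move moves the writhe-tuple within $C^*(\ZZ^{\E(G)})$, which is the heart of the argument.

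Next I would verify that the remaining moves do not change the writhe-tuple at all. The moves R2 and R3 are writhe-preserving on every individual knot diagram (R2 adds a canceling pair of opposite-sign crossings; R3 permutes crossings without changing their signs), and this holds for each knot diagram $D_c$ obtained by reading off the cycle $c$, since $c$ simply selects which strands of $D$ to retain — so $w_c$ is unchanged by R2 and R3 applied to $D$. The vertex moves require a separate check: when a strand is slid past a $3$-valent vertex (respecting the cyclic order), one must confirm that the signed crossing count along each cycle is unaffected. Here I would argue that such a move either introduces canceling crossing pairs (R2-like behavior) or permutes crossings without sign change (R3-like behavior) along every cycle passing through the affected region, so again $w_c(D)$ is invariant.

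The main obstacle I anticipate is the careful bookkeeping for the vertex moves: a cycle passing through a degree-$3$ vertex uses exactly two of the three incident edges, and one must track how sliding a strand past the vertex redistributes crossings among the three local strands and confirm that, for \emph{every} choice of which two edges a cycle uses at that vertex, the net signed contribution is preserved. This is where the matching-cyclic-order hypothesis does real work, since it constrains the allowed vertex moves to precisely those that preserve the local combinatorial picture recorded in $G$. Once all move types are handled, the theorem follows by composing the per-move changes: the total change in $w_C(D)$ is a sum of terms each lying in $C^*(\ZZ^{\E(G)})$ (with R2, R3, and vertex moves contributing zero), hence lies in the subgroup $C^*(\ZZ^{\E(G)})$ itself.
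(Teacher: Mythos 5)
Your treatment of moves R1--R4 matches the paper's, but there is a genuine gap at the vertex moves: you assert that sliding or twisting at a vertex leaves every $w(D_c)$ unchanged, and you appear to believe the matching-cyclic-order hypothesis lets you avoid the vertex-twisting move altogether. Neither is true. Kauffman's move set contains \emph{two} vertex moves: R4 (a strand passes over or under a vertex), which indeed preserves the writhe tuple by the R2/R3-style bookkeeping you describe, and R5 (the vertex is twisted, introducing or removing a single crossing between two of the three incident edges $e_1,e_2$). An R5 move changes $w(D_c)$ by $\pm 1$ for every cycle $c$ containing both $e_1$ and $e_2$, so the writhe tuple genuinely moves; moreover the resulting change equals $C^*\bigl(\tfrac{\varepsilon}{2}(e_1+e_2-e_3)\bigr)$ where $e_3$ is the third edge at the vertex, and this is the image of a \emph{half-integer} vector that in general does \textbf{not} lie in $C^*(\ZZ^{\E(G)})$. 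A sequence of moves joining $D_1$ to $D_2$ cannot in general be chosen to avoid R5: the intermediate diagrams may have different cyclic orders, and only the parity of the number of R5 moves at each vertex is controlled.

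The missing idea --- which is the actual heart of the theorem, not the R1 computation --- is that the matching-cyclic-order hypothesis forces an \emph{even} number of R5 moves at each vertex, so the half-integer contributions $\tfrac{\varepsilon}{2}(e_1+e_2-e_3)$ and $\tfrac{\varepsilon'}{2}(e_1'+e_2'-e_3')$ at a given vertex can be grouped into pairs whose sums have coefficients in $\{-1,0,1\}$, hence lie in $\ZZ^{\E(G)}$. This pairing argument is also exactly where $3$-regularity enters (it guarantees each half-integer vector is supported on the three edges at the vertex, so paired contributions sum to integer vectors). Without handling R5 this way, the proof does not go through.
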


This theorem is re-stated and proved in the text as Theorem~\ref{thm:main}. The ``matching cyclic orders'' assumption is a minor condition, easily circumvented when one wishes to apply the theorem for obstructing an isotopy between $\Gamma_1$~and~$\Gamma_2$.

We also show that for each $n\ge 3$, Theorem~\ref{thm:main_intro} can be applied to prove that the elements in a particular infinite family of embeddings of the Möbius ladder~$\M_n$ are pairwise non-isotopic (Example~\ref{ex:moebius}), vastly generalizing the example in Figure~\ref{fig:mainexample_simplified}. At the end, we explain how a slightly coarser equivalence class of writhe tuples than the one in Theorem~\ref{thm:main_intro} can be recovered from an invariant due to Wu and Tanayama \cite{Wu59, Tan95} that classifies spatial graphs ``up to homology''.

\subsection*{Structure of this article}

Section~\ref{sec:preliminaries} lays out the terminology necessary for stating and proving our main result.
 We group these preliminaries into three subsections, pertaining to knots, graphs, and spatial graphs. Most of the material is standard and can be skipped and consulted  occasionally for clarifying our conventions, with two exceptions: Definitions \ref{dfn:cyclemap}~and~\ref{dfn:writhetuple} are not standard, and play a key role in this article.
 
In Section~\ref{sec:mainthm} we prove Theorem~\ref{thm:main_intro} and use it to show that the spatial graphs in Figure~\ref{fig:mainexample_simplified} are not isotopic. This example is then generalized to certain families of embeddings of the Möbius ladder~$\M_n$, for each $n\ge 3$.

Section~\ref{sec:extras} collects a few additional observations, namely on limitations of Theorem~\ref{thm:main_intro}, and in Section~\ref{sec:wu} we briefly explain the Wu invariant and its connection to Theorem~\ref{thm:main_intro}.

\subsection*{Acknowledgements}

We are thankful to Lars Munser and Yuri Santos Rego for several discussions. The first author was supported by the CRC 1085 ``Higher Invariants'' (Universität Regensburg, funded by the DFG).

\section{Preliminaries}\label{sec:preliminaries}

We shall work in the piecewise-linear (``PL'') category, following the conventions in Rourke-Sanderson's textbook \cite{RS72}.

\subsection{Knots}\label{sec:knots}
In this article, we will define a \textbf{knot} to be a PL subspace $K\subset \RR^3$ that is PL-homeomorphic to~$\SS^1$. We do not take the homeomorphism as part of the structure; in particular, knots will not carry orientations. We say $K$~\textbf{has a diagram} if its image under the standard projection $\pi \colon \RR^3 \to \RR^2$ has only double-point self-intersections, and they are all transverse (``\textbf{crossings}''). Then, the \textbf{diagram} of~$K$ is $\pi(K)$ together with the data, at each crossing, of which strand crosses over which.
After choosing an orientation for~$K$, one associates to each crossing in a diagram~$D$ a sign and recording whether the over-strand moves right-to-left or left-to-right of the under-strand; see Figure~\ref{fig:crossings}. Although one reads off crossing signs by choosing an orientation of~$K$, reversing this orientation does not change the sign, so the sign of a crossing is well-defined for diagrams of unoriented knots.
If $n_+, n_-$ are, respectively, the number of positive and negative crossings, then the \textbf{writhe} of~$D$ is the integer
\[w(D) := n_+ - n_-.\]

\begin{figure}[h]
	\centering
	\def \svgwidth{0.2\linewidth}
	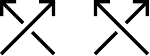
	\caption{A positive crossing (left) and a negative crossing (right).}
	\label{fig:crossings}
\end{figure}

Two knots $K_1, K_2$ are \textbf{isotopic} if there is a PL isotopy from $\mathrm{id}_{\RR^3}$ to a PL self-homeo\-mor\-phism~$\Phi$ of~$\RR^3$ with $\Phi(K_1) = K_2$.
It is well-known that the diagram of a knot~$K$ determines~$K$ up to isotopy, and every knot is isotopic to one that has a diagram. As we are interested in knots only up to isotopy, we loosely speak of ``a diagram of~$K$'', even when $K$~does not have a diagram, to mean ``the diagram of a knot isotopic to~$K$''.

As Figure \ref{fig:changewrithe} illustrates, one can easily find isotopic knots with diagrams of different writhes. In fact, this trick of ``adding a kink'' to a diagram allows one to find isotopic knots whose diagrams realize any desired integer. The writhe of a diagram is thus very far from being a knot invariant.

\begin{figure}[h]
	\centering
	\def \svgwidth{0.6\linewidth}
\begingroup%
  \makeatletter%
  \providecommand\color[2][]{%
    \errmessage{(Inkscape) Color is used for the text in Inkscape, but the package 'color.sty' is not loaded}%
    \renewcommand\color[2][]{}%
  }%
  \providecommand\transparent[1]{%
    \errmessage{(Inkscape) Transparency is used (non-zero) for the text in Inkscape, but the package 'transparent.sty' is not loaded}%
    \renewcommand\transparent[1]{}%
  }%
  \providecommand\rotatebox[2]{#2}%
  \newcommand*\fsize{\dimexpr\f@size pt\relax}%
  \newcommand*\lineheight[1]{\fontsize{\fsize}{#1\fsize}\selectfont}%
  \ifx\svgwidth\undefined%
    \setlength{\unitlength}{387.3767186bp}%
    \ifx\svgscale\undefined%
      \relax%
    \else%
      \setlength{\unitlength}{\unitlength * \real{\svgscale}}%
    \fi%
  \else%
    \setlength{\unitlength}{\svgwidth}%
  \fi%
  \global\let\svgwidth\undefined%
  \global\let\svgscale\undefined%
  \makeatother%
  \begin{picture}(1,0.29711944)%
    \lineheight{1}%
    \setlength\tabcolsep{0pt}%
    \put(0,0){\includegraphics[width=\unitlength,page=1]{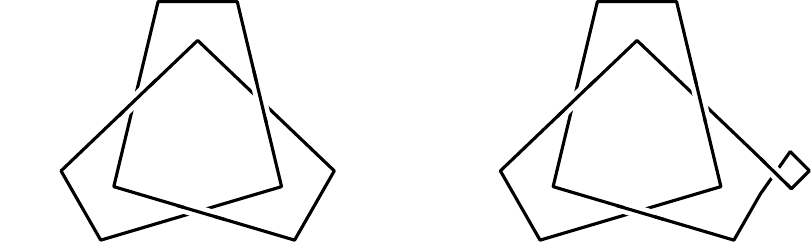}}%
    \put(-0.00335793,0.14488853){\color[rgb]{0,0,0}\makebox(0,0)[lt]{\lineheight{0}\smash{\begin{tabular}[t]{l}$D_1\colon$\end{tabular}}}}%
    \put(0.53236316,0.14488853){\color[rgb]{0,0,0}\makebox(0,0)[lt]{\lineheight{0}\smash{\begin{tabular}[t]{l}$D_2\colon$\end{tabular}}}}%
  \end{picture}%
\endgroup%

	\caption{Diagrams of isotopic knots with $w(D_1) = 3$ and $w(D_2)=4$.}
	\label{fig:changewrithe}
\end{figure}

\subsection{Graphs}\label{sec:graphs}

A \textbf{graph} will be understood as the data of a set~$\V(G)$ of \textbf{vertices}, a set~$\E(G)$ of \textbf{edges}, and a pair of functions $s_G, t_G \colon \E(G) \to \V(G)$. 
Thus we allow \textbf{loops} (that is, edges~$e$ with $s_G(e) = t_G(e)$) and multiple edges between the same two vertices, and edges that are not loops carry an orientation. 
The set of \textbf{half-edges} of~$G$ is the product $\E(G) \times\{0,1\}$. A half-edge~$(e,0)$ (respectively $(e,1)$) is \textbf{incident} to a vertex~$v$ if $s_G(e)=v$ (respectively $t_G(e)=v$).
The \textbf{degree} of~$v$ is the number of half-edges incident to~$v$; in other words:
\[\operatorname{deg}(v) := \# s_G^{-1}(v) + \# t_G^{-1}(v) \in \NN\cup\{\infty\}.\]
Given $n \in \NN$, we say $G$~is \textbf{$n$-regular} if all vertices of~$G$ have degree~$n$.

The \textbf{topological realization} of~$G$ is the space
\[ |G| := \bigg( \V(G)\sqcup  \coprod_{e\in \E(G)} [0,1]_e \bigg) / \sim, \]
where $s_G(e) \sim 0_e$ and $t_G(e)\sim 1_e$ for every~$e\in \E(G)$.
We are interested exclusively in the situation where $G$~is finite, so $|G|$~embeds as a PL subspace in~$\RR^3$. Any two PL structures on~$|G|$ are PL-homeomorphic restricting to the identity on~$\V(G)$, and respecting the orientations of all topological edges~$[0,1]_e$, so from now on we always understand~$|G|$ to carry a canonical polyhedral structure.

A \textbf{cycle} of length~$k\in\NN_{\ge1}$, or \textbf{$k$-cycle}, in~$G$ is a sub-graph comprised of $k$~distinct vertices $v_1, \ldots, v_k$ and $k$~distinct edges $e_1, \ldots, e_k$, such that for each $i\in \ZZ / k$, we have $\{s_G(e_i), t_G(e_i)\} =\{v_i, v_{i+1}\}$. 
This definition is independent of the orientation of the edges.

\begin{dfn}\label{dfn:cyclemap}
	Given a finite set~$C$ of cycles in a graph~$G$, we define the map of free $\ZZ$-modules
	\begin{align*}
		C^* \colon \ZZ^{\E(G)} &\to \ZZ^{C}\\
		e &\mapsto \sum_{c \ni e} c.
	\end{align*}
\end{dfn}

\begin{ex}\label{ex:cyclemap}
	Figure~\ref{fig:K33} depicts the complete bipartite graph $G = \K$ on $3+3$ vertices.  We refrain from specifying edge orientations, as they will not play a role in what follows. $\K$~has $9$~edges and $15$~cycles: $9$~of length~$4$, and $6$~of length~$6$. We present the matrix of~$C^*$ in the standard bases, for $C$~the set of $6$-cycles. 
\begin{figure}[h]
	\centering
	\def \svgwidth{0.2\linewidth}
\begingroup%
  \makeatletter%
  \providecommand\color[2][]{%
    \errmessage{(Inkscape) Color is used for the text in Inkscape, but the package 'color.sty' is not loaded}%
    \renewcommand\color[2][]{}%
  }%
  \providecommand\transparent[1]{%
    \errmessage{(Inkscape) Transparency is used (non-zero) for the text in Inkscape, but the package 'transparent.sty' is not loaded}%
    \renewcommand\transparent[1]{}%
  }%
  \providecommand\rotatebox[2]{#2}%
  \newcommand*\fsize{\dimexpr\f@size pt\relax}%
  \newcommand*\lineheight[1]{\fontsize{\fsize}{#1\fsize}\selectfont}%
  \ifx\svgwidth\undefined%
    \setlength{\unitlength}{141.95859534bp}%
    \ifx\svgscale\undefined%
      \relax%
    \else%
      \setlength{\unitlength}{\unitlength * \real{\svgscale}}%
    \fi%
  \else%
    \setlength{\unitlength}{\svgwidth}%
  \fi%
  \global\let\svgwidth\undefined%
  \global\let\svgscale\undefined%
  \makeatother%
  \begin{picture}(1,0.83940431)%
    \lineheight{1}%
    \setlength\tabcolsep{0pt}%
    \put(0,0){\includegraphics[width=\unitlength,page=1]{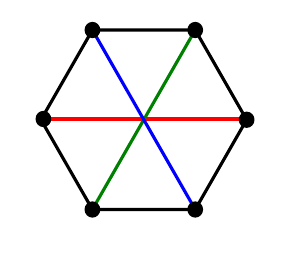}}%
    \put(0.89288958,0.41084337){\color[rgb]{0,0,0}\makebox(0,0)[lt]{\lineheight{0}\smash{\begin{tabular}[t]{l}$0$\end{tabular}}}}%
    \put(-0.00526369,0.41084337){\color[rgb]{0,0,0}\makebox(0,0)[lt]{\lineheight{0}\smash{\begin{tabular}[t]{l}$3$\end{tabular}}}}%
    \put(0.2271996,0.79123611){\color[rgb]{0,0,0}\makebox(0,0)[lt]{\lineheight{0}\smash{\begin{tabular}[t]{l}$2$\end{tabular}}}}%
    \put(0.67099292,0.79123611){\color[rgb]{0,0,0}\makebox(0,0)[lt]{\lineheight{0}\smash{\begin{tabular}[t]{l}$1$\end{tabular}}}}%
    \put(0.2271996,0.00931791){\color[rgb]{0,0,0}\makebox(0,0)[lt]{\lineheight{0}\smash{\begin{tabular}[t]{l}$4$\end{tabular}}}}%
    \put(0.67099292,0.00931791){\color[rgb]{0,0,0}\makebox(0,0)[lt]{\lineheight{0}\smash{\begin{tabular}[t]{l}$5$\end{tabular}}}}%
  \end{picture}%
\endgroup%
\medskip
	
	\begin{tabular}[c]{cccccccccc}
		&\includegraphics[width=\picsize]{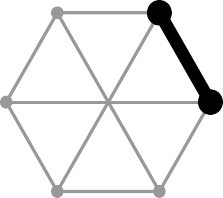}
		&\includegraphics[width=\picsize]{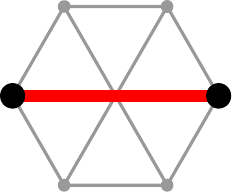}
		&\reflectbox{\includegraphics[width=\picsize, angle=180, origin=c]{e01}}
		&\includegraphics[width=\picsize]{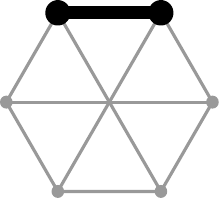}
		&\includegraphics[width=\picsize]{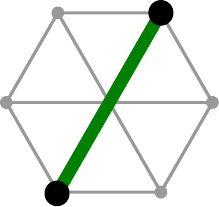}
		&\reflectbox{\includegraphics[width=\picsize]{e01}}
		&\includegraphics[width=\picsize]{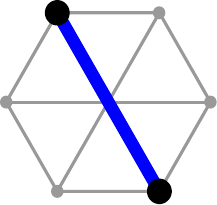}
		&\includegraphics[width=\picsize, angle=180, origin=c]{e01}
		&\includegraphics[width=\picsize, angle=180, origin=c]{e12}\\
		\includegraphics[width=\picsize]{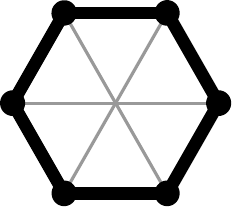}&  1&  &  1&  1&  &  1& &  1& 1 \\
		\includegraphics[width=\picsize]{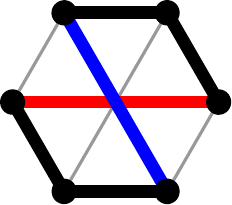}&  1& 1 &  &  1&  &  &  1&  1&  1\\
		\includegraphics[width=\picsize]{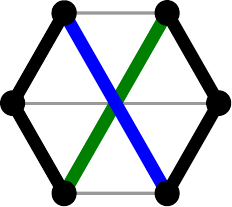}&  1&  &  1&  &  1&  1&  1&  1&  \\
		\includegraphics[width=\picsize]{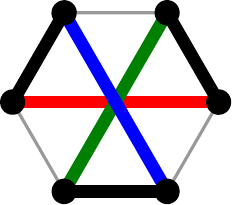}&  1&  1&  &  &  1&  1&  1&  &  1\\
		\includegraphics[width=\picsize]{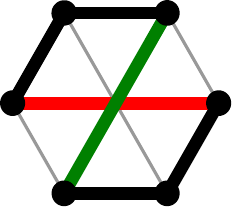}&  &  1&  1&  1&  1&  1& &  &  1\\
		\includegraphics[width=\picsize]{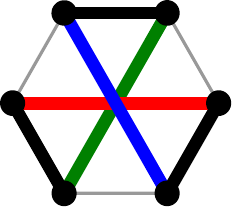}&  &  1&  1&  1&  1&  &  1& 1 &  \\
	\end{tabular}
	\caption{The graph $\K$ and the matrix of $C^*$, with $C$~the set of $6$-cycles. Some edges are colored for readability.}
	\label{fig:K33}
\end{figure}
\end{ex}

\subsection{Spatial graphs}\label{sec:spatialgraphs}

	Let a~$G$ be a finite graph. A \textbf{spatial graph} of type~$G$ is a PL embedding $\Gamma \colon |G| \into \RR^3$. 
The \textbf{support} of~$\Gamma$ is the set $|\Gamma| := \Gamma(|G|)$.
Denoting by $\pi \colon \RR^3 \onto \RR^2$ the standard projection, we say $\Gamma$~\textbf{has a diagram} if
for the standard projection $\pi\colon\RR^3 \to \RR^2$, the composition $\pi \circ \Gamma$ is injective, except for finitely-many transverse double point intersections (``\textbf{crossings}'') between interior points of edges.

Then, the \textbf{diagram} of~$\Gamma$ is the following data:
\begin{enumerate}
	\item for each $v\in \V(G)$, the point $\pi(\Gamma(v))$,
	\item for each $e\in \E(G)$, the PL-immersed curve $\pi(\Gamma([0,1]_e))$, alongside the orientation inherited from~$[0,1]_e$,
	\item at each \textbf{crossing} $p\in \RR^2$, the datum of which of the two strands meeting at~$p$ is over-crossing.
\end{enumerate}

We often use the same terminology to refer to the vertices/edges of~$G$, to the $\Gamma$-images of their topological realizations, and to their $\pi$-projections.

The spatial graph~$\Gamma$ associates to each half-edge $(e,0)$ (resp. $(e,1)$) of~$G$ the arc $\Gamma([0,\frac 12]_e)$ (resp. $\Gamma([\frac 12, 1]_e)$) in~$\RR^3$.
For each $v \in \V (G)$, the diagram~$D$ induces a cyclic ordering of these arcs, obtained by reading them off counterclockwise as their projections emanate from~$v$, which in turn cyclically orders the half-edges. Two diagrams~$D_1, D_2$ of spatial graphs of the same type~$G$ have \textbf{matching cyclic orders} if for every vertex~$v\in \V(G)$ they induce the same cyclic orders of the half-edges at~$v$.

Two spatial graphs $\Gamma_1, \Gamma_2$ of type~$G$, are \textbf{isotopic}, written $\Gamma_1 \cong\Gamma_2$, if there is a PL isotopy of~$\RR^3$ from~$\mathrm{id}_{\RR^3}$ to a map~$\Phi$ with $\Phi\circ \Gamma_1 = \Gamma_2$.
A diagram determines a spatial graph~$\Gamma$ up to isotopy, and one can always isotope a spatial graph~$\Gamma$ to one with a diagram, so we will often say ``a diagram of~$\Gamma$'' to mean ``the diagram of a spatial graph isotopic to~$\Gamma$''.

Kauffman has given the following characterization of when two diagrams represent isotopic spatial graphs:

\begin{thm}[Reidemeister moves for spatial graphs { \cite[Theorem~2.1]{Kau89}}]\label{thm:reidemeister}
	Any two diagrams representing isotopic spatial graphs differ by plane isotopies and a finite sequence of moves in five explicit types, illustrated in Figure~\ref{fig:reidemeister} for the $3$-regular case.\footnote{We include only one variant of R3, as the second can be deduced from this one and R2.}
\end{thm}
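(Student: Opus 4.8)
The plan is to adapt the classical PL proof of the Reidemeister theorem for knots, treating the vertices as the only genuinely new feature. Working in the PL category, I would first invoke the standard fact (see \cite{RS72}) that an ambient PL isotopy carrying $|\Gamma_1|$ onto $|\Gamma_2|$ is generated by \emph{elementary moves}: each one either replaces an arc $[a,b]$ lying in a single edge by the other two sides $[a,c]\cup[c,b]$ of a triangle $T=acb$ whose interior is disjoint from the rest of the support, or relocates the image $\Gamma(v)$ of a vertex by coning its star to a nearby apex. Subdividing the isotopy parameter finely enough, this yields a finite chain of spatial graphs $\Gamma_1=\Lambda_0,\Lambda_1,\dots,\Lambda_N=\Gamma_2$ in which consecutive terms differ by a single such move; after a generic perturbation I may assume that every $\Lambda_i$ has a diagram, and that each triangle (or star) is in general position with respect to the projection $\pi$ and to the rest of the graph.

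The second step is localization. Each elementary move factors, after a sufficiently fine subdivision of its triangle, into a sequence of moves across \emph{tiny} triangles, each of whose projection meets the projection of the remaining diagram in at most one elementary configuration. Away from the vertices this is exactly the classical analysis: a tiny triangle disjoint in projection from everything else induces only a plane isotopy; one that folds a strand back on itself induces R1; one swept across a single transverse strand induces R2; and one swept across an existing crossing induces R3. This recovers the three classical moves, using the footnote's observation that the omitted variant of R3 follows from the listed one together with R2.

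The heart of the argument, and the only place the $3$-regular hypothesis enters, is the behaviour near a vertex~$v$. Here I would fix a small ball~$B$ around~$\Gamma(v)$ in which $|\Lambda_i|$ is a cone on three points, so that $\pi(B\cap|\Lambda_i|)$ is a tripod up to plane isotopy, and classify the codimension-one degenerations that a tiny move can force inside~$B$. Two new phenomena occur: a triangle spanned by an arc \emph{not} incident to~$v$ may sweep that arc across~$\Gamma(v)$ in projection, giving the move that pushes a strand over or under a vertex (R4); and a (star or arc) move at~$v$ may rotate one half-edge past a neighbouring one, giving the vertex move (R5) -- which, notably, is the move capable of altering the induced cyclic order, explaining why the main theorem must hypothesize matching cyclic orders. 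Because $\deg v = 3$, the list of local pictures is finite and short, and one checks case-by-case that each is realized by R4, R5, or a plane isotopy.

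I expect this vertex bookkeeping -- enumerating every local degeneration and verifying that the five move types are \emph{exhaustive}, rather than merely sufficient -- to be the main obstacle; the knot-theoretic part is routine, and the $3$-regularity is what keeps the enumeration tractable. The converse direction, that diagrams related by the five moves represent isotopic graphs, is immediate, since each move is visibly supported in a ball and hence induced by an ambient PL isotopy.
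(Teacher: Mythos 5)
The paper does not prove this statement at all: it is quoted from Kauffman \cite[Theorem~2.1]{Kau89}, so there is no internal proof to compare yours against. Judged on its own terms, your outline follows the standard strategy --- decompose an ambient PL isotopy into elementary triangle ($\Delta$-)moves, subdivide until each triangle meets the rest of the projected diagram in at most one elementary configuration, and classify the resulting local pictures --- and this is essentially the route Kauffman takes, so the overall approach is the right one.

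Two caveats, though. First, the step you yourself flag as ``the heart of the argument'' --- exhaustively enumerating the local degenerations near a vertex and verifying that every one of them is realized by R4, R5, or a plane isotopy --- is precisely the nontrivial content of the theorem, and it is only gestured at; as written the proposal is a plan for a proof rather than a proof. Second, your claim that $3$-regularity is ``the only place the hypothesis enters'' and is ``what keeps the enumeration tractable'' misreads the statement: Kauffman's theorem has no regularity hypothesis, R4 and R5 are formulated for a vertex with any number of incident half-edges, and the paper says explicitly that Figure~\ref{fig:reidemeister} merely \emph{illustrates} the degree-$3$ case because that is all the main theorem needs. The local analysis at a vertex of degree $d$ is no harder in principle --- either an unrelated strand sweeps across the vertex in projection (R4), or an extremal half-edge rotates past the others (R5) --- so building $3$-regularity into the argument both is unnecessary and would prove a strictly weaker statement than the one being cited.
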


\begin{figure}[h]
	\centering
	\def \svgwidth{0.9\linewidth}
	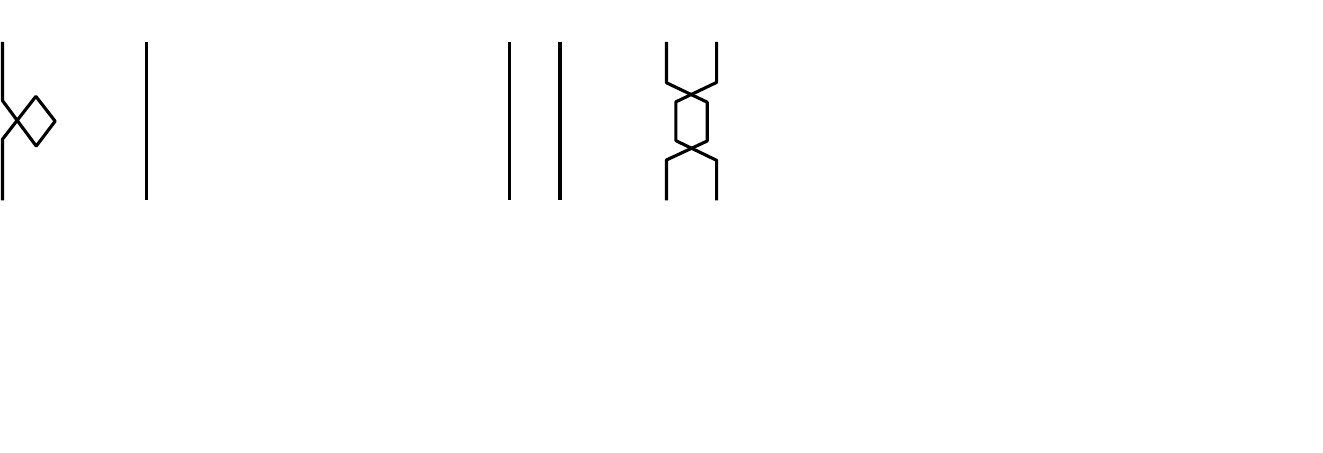
	\caption{Reidemeister moves for $3$-regular spatial graphs.}
	\label{fig:reidemeister}
\end{figure}

Evidently, the converse statement also holds: diagrams related by a sequence of these moves represent isotopic spatial graphs.

Moves R1 -- R3 are the incarnations of the classical Reidemeister moves for knots. Moves R4 and R5 involve a vertex of~$\Gamma$, which in general might have any degree. We illustrate the degree~$3$ case because our main result concerns $3$-regular spatial graphs. Note that the induced cyclic ordering on the half-edges at a vertex is only affected by moves of type R5 at that vertex.

\subsubsection*{Writhe tuples}

A spatial graph~$\Gamma$ of type~$G$ specifies, for each cycle~$c$ of~$G$, a knot~$\Gamma(|c|)$, and 
a diagram~$D$ of~$\Gamma$ yields a diagram~$D_c$ of that knot.

\begin{dfn}\label{dfn:writhetuple}
	Let $\Gamma$ be a spatial graph of type~$G$, and $D$~a diagram of~$\Gamma$. Given a set~$C$ of cycles in~$G$, the \textbf{writhe tuple} of~$D$ associated to~$C$ is
	\[w_C(D) := (w(D_c))_{c\in C} \in \ZZ^C.\]
\end{dfn}
We emphasize that although the edges of~$\Gamma$ are oriented, these orientations play no role in computing~$w_C(D)$. The signs of the crossings to be tallied into each entry~$w(D_c)$ are given by choosing an orientation for the knot~$\Gamma(|c|)$ and using it for attaching signs to the crossings in~$D_c$. Example~\ref{ex:writhetuple} shows a crossing in a spatial graph diagram~$D$ that is counted with different signs depending on which~$D_c$ is being considered.

\begin{ex}\label{ex:writhetuple}
Figure~\ref{fig:mainexample} depicts diagrams~$D_1, D_2$ of two spatial graphs~$\Gamma_1, \Gamma_2$ of type~$\K$, and their writhe tuples associated to the set~$C$ of $6$-cycles. 
\begin{figure}[h]
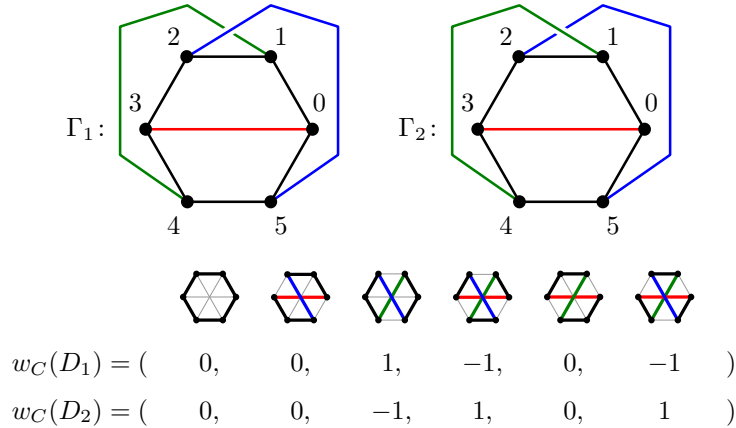

	\centering
	\def \svgwidth{0.55\linewidth}
	\input{Figures/mainexample_simplified.pdf_tex}
	\bigskip
	
	\begin{tabular}{cccccccl}
		& \includegraphics[width=\picsize]{c012345}
		& \includegraphics[width=\picsize]{c012543}
		& \includegraphics[width=\picsize]{c014325}
		& \includegraphics[width=\picsize]{c014523}
		& \includegraphics[width=\picsize]{c032145}
		& \includegraphics[width=\picsize]{c034125}& \medskip \\
		
		$w_C(D_1) = ($ & $0,$&$0,$&$1,$&$-1,$&$0,$&$-1$&$)$\medskip\\
		$w_C(D_2) = ($ & $0,$&$0,$&$-1,$&$1,$&$0,$&$1$&$)$
	\end{tabular}
	
	\caption{Diagrams for two spatial graphs of type~$\K$, and their writhe tuples associated to the set~$C$ of $6$-cycles.}
	\label{fig:mainexample}
\end{figure}
\end{ex}

Although, as we observed in Section~\ref{sec:knots}, the writhe of a knot diagram contains no information about the knot, we will show that the writhe tuple of a spatial graph diagram can contain information about the spatial graph. 

\section{Main result}\label{sec:mainthm}
In this section we shall prove Theorem~\ref{thm:main_intro}, which we re-state here for the reader's convenience:

\begin{thm}[Invariance of ${[w_C(D)]}$]\label{thm:main}
	 Let $\Gamma_1, \Gamma_2$ be $3$-regular spatial graphs of the same type~$G$, and let $C$~be a set of cycles of~$G$. Let $D_1, D_2$ be diagrams of $\Gamma_1, \Gamma_2$, respectively, with matching cyclic orders. If $\Gamma_1 \cong \Gamma_2$, then 
	 \[w_C(D_2) - w_C(D_1) \in C^*(\ZZ^{\E(G)}).\]
\end{thm}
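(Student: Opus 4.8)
The plan is to appeal to Kauffman's theorem (Theorem~\ref{thm:reidemeister}): as $\Gamma_1\cong\Gamma_2$, the diagrams $D_1$ and $D_2$ are connected by a finite sequence of plane isotopies and moves of types R1--R5. Plane isotopies change neither crossings nor cyclic orders, so they may be ignored. Since $C^*(\ZZ^{\E(G)})$ is a subgroup of $\ZZ^C$, it is enough to show that each move in the sequence alters the writhe tuple $w_C(D)$ by an element of this subgroup; adding up the contributions along the sequence then yields the theorem. I will repeatedly use the observation from Section~\ref{sec:knots} that a crossing contributes to the entry $w(D_c)$ exactly when both strands meeting there belong to edges of~$c$, and that the sign it contributes is independent of the chosen orientation of the knot~$\Gamma(|c|)$.

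The moves R1--R4 are the routine part. For R1, the new kink is a self-crossing of a single edge~$e$ of some sign $\varepsilon\in\{\pm1\}$; it enters $w(D_c)$ with that same sign exactly for the cycles $c\ni e$, so $w_C(D)$ changes by $\varepsilon\sum_{c\ni e}c=\varepsilon\,C^*(e)\in C^*(\ZZ^{\E(G)})$. For R2 the two crossings involved share a strand-pair and have opposite signs, so for each cycle either both or neither lies in $D_c$ and $w(D_c)$ is unchanged. For R3 the three crossings are only rearranged, with signs preserved, so every $w(D_c)$ is unchanged. For R4 a strand~$e$ is slid past a $3$-valent vertex~$v$, creating three crossings of $e$ with the edges at~$v$; the point is that, orienting those edges away from~$v$, all three crossings receive the same sign, because they all lie to one side of~$e$. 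A cycle through~$v$ that contains~$e$ enters~$v$ along one of its edges and leaves along another, reversing exactly one of the two relevant orientations, so the two corresponding crossings cancel in $w(D_c)$; all other cycles pick up no new crossing. Hence R4 leaves $w_C(D)$ unchanged.

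The genuine difficulty is R5, the only move that changes a cyclic order, and this is exactly where the matching-cyclic-orders hypothesis is used. An R5 move at~$v$ transposes two of the three edges $a,b,d$ at~$v$ in the cyclic order; reading off the figure, I expect it to introduce a single crossing between the transposed pair, say $a$ and~$b$. As in the R4 analysis, this crossing enters $w(D_c)$ with a fixed sign precisely for those cycles containing both $a$ and~$b$, so $w_C(D)$ changes by $\pm S_{ab}$, where $S_{ab}:=\sum_{c\ni a,\,b}c$ (the sum over cycles containing both edges). This vector need not lie in $C^*(\ZZ^{\E(G)})$. However, grouping the cycles through an edge at~$v$ according to which second edge they use there gives $C^*(a)=S_{ab}+S_{ad}$, $C^*(b)=S_{ab}+S_{bd}$ and $C^*(d)=S_{ad}+S_{bd}$, whence $2S_{ab}=C^*(a)+C^*(b)-C^*(d)\in C^*(\ZZ^{\E(G)})$. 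Consequently, modulo the subgroup $C^*(\ZZ^{\E(G)})$ the three classes $S_{ab},S_{ad},S_{bd}$ all coincide with a single $2$-torsion class $\sigma_v$, so every R5 move at~$v$ changes $w_C(D)$ by $\sigma_v$ modulo the subgroup, regardless of which pair it transposes.

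It remains to count parities. A $3$-valent vertex admits exactly two cyclic orders, and each R5 move at~$v$ toggles between them; since $D_1$ and $D_2$ have matching cyclic orders, the number of R5 moves at every vertex~$v$ is even. Each contributes the $2$-torsion class $\sigma_v$, so their total contribution is an even multiple of $\sigma_v$, hence $\equiv0$ modulo $C^*(\ZZ^{\E(G)})$. Together with the contributions of R1--R4, which already lie in $C^*(\ZZ^{\E(G)})$, this gives $w_C(D_2)-w_C(D_1)\in C^*(\ZZ^{\E(G)})$. The points I would check most carefully are the exact crossing count and local sign behaviour of R5 as drawn in Figure~\ref{fig:reidemeister} (the whole argument hinges on it producing a single, consistently-signed crossing between the transposed edges), and the constancy across cycles of the R4 and R5 crossing signs, which rests on the orientation-independence of crossing signs recalled above.
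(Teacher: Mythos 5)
Your proof is correct and takes essentially the same route as the paper's: a move-by-move analysis via Kauffman's theorem in which R1 contributes $\varepsilon C^*(e)$, moves R2--R4 contribute nothing, and the matching-cyclic-orders hypothesis forces an even number of R5 moves at each vertex. Your identity $2S_{ab}=C^*(a)+C^*(b)-C^*(d)$ is precisely the paper's Lemma~\ref{lem:R5} (stated there via a half-integer extension of~$C^*$), and your bookkeeping with the $2$-torsion class $\sigma_v$ is equivalent to the paper's pairing of the two R5 contributions at each vertex; the only detail you elide is that $a,b,d$ need not be distinct when $G$ has a loop at~$v$, a case the paper flags explicitly but which does not change the argument.
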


Note that the hypothesis on the cyclic orders is no significant loss of generality. In applying Theorem~\ref{thm:main} to show that $D_1$~and~$D_2$ do not represent isotopic spatial graphs, one can first modify them with R5 moves to ensure that this condition is met.

We will prove Theorem~\ref{thm:main} by studying how the writhe tuple is affected by each Reidemeister move. The 3-regularity assumption plays a role only in describing the effect of R5-type moves.

\begin{lem}[Moves R1--R4]\label{lem:R14}
	Let $D,D'$ be diagrams of spatial graphs of type~$G$, and let $C$ be a set of cycles of~$G$.
	\begin{enumerate}
		\item If $D'$~differs from~$D$ by an $\mathrm{R1}$ move at the edge $e\in\E(G)$, then $w_C(D') = w_C(D) + \varepsilon C^*(e)$, where $\varepsilon = 1$ if the move adds a positive crossing or removes a negative crossing; otherwise $\varepsilon = -1$.
		\item If $D'$~differs from~$D$ by a move of type~$\mathrm{R2}, \mathrm{R3}$~or~$\mathrm{R4}$, then  $w_C(D') = w_C(D)$.
	\end{enumerate}
\end{lem}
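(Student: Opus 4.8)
The plan is to prove Lemma~\ref{lem:R14} by analyzing, move-by-move, how each Reidemeister move R1--R4 changes the writhe of each knot diagram~$D_c$ for $c\in C$, and then packaging these changes into the language of the map~$C^*$. The key observation throughout is that a Reidemeister move is \emph{local}: it alters the diagram only inside a small disk, leaving everything outside unchanged. Hence, for a fixed cycle~$c$, the writhe~$w(D_c)$ changes if and only if the strands participating in the move all belong to the knot~$\Gamma(|c|)$, i.e.\ all edges involved in the move lie on~$c$.

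\textbf{Moves R2, R3, R4.} These moves do not change the writhe of \emph{any} knot diagram. For R2, the two crossings created (or destroyed) have opposite signs, and this cancellation is independent of the chosen orientation, so whenever both strands belong to~$\Gamma(|c|)$, the net contribution to~$w(D_c)$ is zero; if at most one strand belongs to~$c$, then $D_c$ is unaffected entirely. For R3, the three crossings merely slide past one another, preserving both the number and signs of all crossings among any sub-collection of strands, so $w(D_c)$ is unchanged for every~$c$. For R4, a strand is pushed across a vertex; no crossings are created or destroyed, so again $w(D_c)$ is unchanged for every~$c$. In each case I would state the local picture, check the claim for the cycles that traverse the relevant edges, and note that $w(D_c) = w(D'_c)$ for all~$c\in C$, which immediately yields $w_C(D') = w_C(D)$.

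\textbf{Move R1.} Here an R1 move at an edge~$e$ adds or removes a single kink, i.e.\ a single self-crossing of the strand running along~$e$. The sign of this crossing is well-defined independently of orientation (as remarked in Section~\ref{sec:knots}), and I would let $\varepsilon=+1$ if the move increases the signed count (adds a positive crossing or removes a negative one) and $\varepsilon=-1$ otherwise. For a cycle~$c\in C$, the knot diagram~$D_c$ picks up this kink precisely when~$e$ lies on~$c$, in which case $w(D'_c) = w(D_c) + \varepsilon$; when $e\notin c$, the diagram~$D_c$ is unchanged and $w(D'_c) = w(D_c)$. Thus the vector of changes $w_C(D') - w_C(D)$ has entry~$\varepsilon$ in position~$c$ exactly when $c\ni e$, and~$0$ otherwise. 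By Definition~\ref{dfn:cyclemap}, this is exactly $\varepsilon\, C^*(e) = \varepsilon \sum_{c\ni e} c$, giving the claimed formula.

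\textbf{The main subtlety} I expect lies not in any hard computation but in the bookkeeping of crossing signs under the choice of orientation: although each knot~$\Gamma(|c|)$ is oriented independently to compute~$w(D_c)$, one must confirm that the \emph{change} induced by each move is orientation-independent, so that the single scalar~$\varepsilon$ (for R1) or the cancellation (for R2) applies uniformly across all cycles through the affected edges. This is guaranteed because a crossing sign is insensitive to reversing the knot's orientation; the care needed is to verify this for each move rather than to perform any lengthy calculation. The 3-regularity hypothesis plays no role here, consistent with the remark that it is only relevant for R5.
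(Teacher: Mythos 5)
Your treatment of R1, R2 and R3 matches the paper's proof: the R1 kink contributes $\varepsilon$ exactly to the coordinates of cycles through~$e$, giving $\varepsilon C^*(e)$; R2 contributes a cancelling pair of crossings to the cycles containing both edges; R3 only displaces crossings. The orientation-independence point you raise is the right thing to check and is handled the same way in the paper.

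The gap is in your R4 step. You assert that under R4 ``no crossings are created or destroyed,'' but this is false: an R4 move slides a strand of an edge~$e$ past a vertex~$v$, and in doing so it changes which of the half-edges at~$v$ the strand of~$e$ crosses -- in the degree-$3$ picture, for instance, a single crossing with one half-edge is traded for two crossings with the other two half-edges, so the total crossing number of the diagram changes. The conclusion $w(D'_c)=w(D_c)$ is still true, but it needs an argument: a cycle~$c$ through~$v$ contains either none or exactly two of the half-edges at~$v$. If it contains none (or does not contain~$e$), nothing happens. If it contains the two half-edges that both gain (or both lose) a crossing with~$e$, then on~$D_c$ the move is indistinguishable from an R2 move, and the two new crossings cancel. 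If it contains one half-edge that gains and one that loses a crossing, then on~$D_c$ a single crossing is merely displaced, with its sign preserved. This case analysis is exactly what the paper supplies, and your proof needs it (or something equivalent) in place of the incorrect ``no crossings change'' claim.
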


Note that crossings involved in an R1 move have a well-defined sign, so the condition in item 1 makes sense independently of any choice of cycle.

\begin{proof}
	(1) Applying an R1 move at~$e$ adds the same value~$\varepsilon$ to the writhe of~$D_c$ for each~$c\in C$ containing~$e$. The writhes of diagrams associated to other cycles remain unchanged. Hence,
	\[w_C(D') -w_C(D) =  \varepsilon \sum_{c \ni e} c = \varepsilon C^*(e).\]
	
	(2) An R2 move affects only the writhes of knot diagrams~$D_c$ for cycles~$c$ containing both edges involved. In that case, the added crossings come with opposite signs, so either way $w(D_c)$~remains unaltered. Similarly, R3~merely displaces crossings between the same edges, so it has no effect on~$w(D_c)$ for any cycle~$c$, and again $w_C(D') = w_C(D)$.
	
		As for R4, clearly $w(D'_c) = w(D_c)$ if $c$~does not contain the edge~$e$ crossing over/under the relevant vertex~$v$, so assume that $c \ni e$. Next, note that $c$~contains either none, or exactly two among the half-edges incident to~$v$. In the first case, there is again clearly no effect on~$w(D_c)$. If $c$~contains the two half-edges at~$v$ that gain or lose a crossing, then on~$D_c$ this is indistinguishable from an R2 move. Otherwise, $D_c$~merely has one crossing displaced. Either way, $w(D'_c) = w(D_c)$ and we conclude $w_C(D') = w_C(D)$.
\end{proof}

\begin{cor}[Realizing tuples]
	Let $D$~be a diagram of a spatial graph~$\Gamma$ of type~$G$, let $C$~be a set of cycles in~$G$, and let $w\in w_C(D) +  C^*(\ZZ^{\E(G)})$. Then $\Gamma$~has a diagram~$D'$ with $w_C(D') = w$.
\end{cor}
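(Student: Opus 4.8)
The plan is to realize the desired writhe tuple by repeatedly applying R1 moves to the diagram~$D$, exploiting the precise description of their effect given in Lemma~\ref{lem:R14}(1). Since $w \in w_C(D) + C^*(\ZZ^{\E(G)})$, I would fix $x \in \ZZ^{\E(G)}$ with $w - w_C(D) = C^*(x)$, and expand $x = \sum_{e \in \E(G)} n_e\, e$ in the standard basis, with $n_e \in \ZZ$. By $\ZZ$-linearity of~$C^*$ this gives $w - w_C(D) = \sum_{e} n_e\, C^*(e)$, so it suffices to produce a diagram whose writhe tuple exceeds that of~$D$ by $n_e\, C^*(e)$ at each edge.

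First I would note that an R1 move can be performed on any edge~$e$ of the diagram and with either sign of the newly created crossing: any interior arc of~$e$ looks locally like a single strand, so one can insert a positive or a negative kink there. By Lemma~\ref{lem:R14}(1), inserting such a kink changes the writhe tuple by $+C^*(e)$ or $-C^*(e)$ according to the sign of the crossing. Thus, for each edge~$e$, I would apply $|n_e|$ successive R1 moves at~$e$, each adding a crossing of sign $\operatorname{sign}(n_e)$, so that their combined effect on the writhe tuple is exactly $n_e\, C^*(e)$. Carrying this out for every edge yields a diagram~$D'$ with
\[ w_C(D') = w_C(D) + \sum_{e\in\E(G)} n_e\, C^*(e) = w_C(D) + C^*(x) = w. \]
Because each R1 move is a Reidemeister move, $D'$~represents a spatial graph isotopic to~$\Gamma$, and hence counts as a diagram of~$\Gamma$ under the conventions of Section~\ref{sec:spatialgraphs}.

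There is essentially no serious obstacle here: the entire content is already packaged in Lemma~\ref{lem:R14}(1), and the argument is linear-algebraic bookkeeping on top of it. The only points requiring a word of care are that the R1 moves at distinct edges (and the repeated moves at a single edge) act additively on the writhe tuple -- which is immediate, since each move at~$e$ affects only the writhes~$w(D_c)$ for cycles~$c \ni e$ and shifts each such entry by the same~$\pm 1$ -- and that one can always freely choose both the edge and the sign of the inserted kink. Once these are observed, the equality $w_C(D') = w$ follows by summing the per-move contributions.
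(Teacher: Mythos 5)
Your proof is correct and follows exactly the paper's argument: write $w - w_C(D) = C^*((n_e)_e)$, then perform $|n_e|$ R1 moves at each edge $e$ with kinks of sign $\operatorname{sign}(n_e)$, and invoke Lemma~\ref{lem:R14}(1) to sum the contributions. The extra remarks about additivity and the freedom to choose the sign of the kink are fine but not substantively different from what the paper does.
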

\begin{proof}
	If $w - w_C(D) = C^*((n_e)_{e\in \E(G)})$, we modify~$D$ by performing $|n_e|$~moves of type R1 at each edge~$e$, adding crossings of the same sign as~$n_e$. Lemma~\ref{lem:R14}~(1) guarantees the resulting diagram~$D'$ has~$w$ as its writhe tuple.
\end{proof}

In describing the effect of the move~R5, we will use the same notation~$C^*$ for the extended map
\[C^* \colon \left(\tfrac 12 \ZZ\right)^{\E(G)} \to \left(\tfrac 12 \ZZ\right)^C,\]
where $\frac 12 \ZZ := \{\ldots , -1, -\frac 12 ,0, \frac12, 1, \ldots\}$ is the free $\ZZ$-module of half-integers.

\begin{lem}[Move R5]\label{lem:R5}
	Let $D,D'$ be diagrams of $3$-regular spatial graphs of type~$G$, and let~$C$ be a set of cycles of~$G$. Suppose $D'$~differs from~$D$ by an $\mathrm{R5}$ move at the vertex~$v$. Let $e_1, e_2 \in \E(G)$ be the edges at~$v$ that gain or lose a crossing, and $e_3$~the remaining edge.
	
	Then 
	\[w(D') = w(D) + C^*\left( \tfrac \varepsilon 2 (e_1 + e_2 - e_3)\right),\]
	where $\varepsilon = 1$ if the move adds a crossing, and $\varepsilon = -1$ if it removes a crossing.
\end{lem}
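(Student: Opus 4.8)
The plan is to follow the template of the proof of Lemma~\ref{lem:R14}: identify exactly which crossings the move creates or destroys, and see how each is tallied into the various $w(D_c)$. As drawn, the $\mathrm{R5}$ move at $v$ interchanges two half-edges in the cyclic order by sliding the strands $e_1$ and $e_2$ across one another in a neighbourhood of~$v$; its only effect on the diagram is to add or remove a single crossing between $e_1$ and $e_2$, while $e_3$ acquires no new crossing and everything else is moved by a plane isotopy that preserves all writhes. Writing $[e\in c]$ for the indicator that is $1$ when $e$ lies on~$c$ and $0$ otherwise, the $c$-component of $C^*\!\big(\tfrac{\varepsilon}{2}(e_1+e_2-e_3)\big)$ equals $\tfrac{\varepsilon}{2}\big([e_1\in c]+[e_2\in c]-[e_3\in c]\big)$ by Definition~\ref{dfn:cyclemap}, so it suffices to prove the scalar identity
\[ w(D'_c) - w(D_c) = \tfrac{\varepsilon}{2}\big([e_1\in c]+[e_2\in c]-[e_3\in c]\big) \qquad\text{for every } c\in C. \]

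I would first dispose of the cycles not passing through~$v$: such a $c$ contains none of $e_1,e_2,e_3$, its diagram $D_c$ is literally unchanged, and both sides vanish. If $c$ does pass through~$v$, then by the definition of a cycle it uses exactly two of the three edges incident to~$v$, giving three subcases. If $c$ uses $e_1$ and $e_3$, or $e_2$ and $e_3$, then the new crossing lies between $e_1$ and $e_2$, one of which is not on~$c$; hence it is not a self-crossing of $\Gamma(|c|)$ and does not appear in $D_c$ at all, so $w(D'_c)=w(D_c)$, in agreement with $\tfrac{\varepsilon}{2}(1+0-1)=0$. The crossing registers only when $c$ uses both $e_1$ and $e_2$, and there the right-hand side reads $\tfrac{\varepsilon}{2}(1+1-0)=\varepsilon$.

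The crux is therefore to show that, for every cycle $c$ through~$v$ that uses $e_1$ and~$e_2$, the crossing is counted with one and the same sign. I would argue this from orientation-independence: the sign of a crossing is read off after orienting $\Gamma(|c|)$, and it is unchanged under reversing that orientation, which flips both strands at once. Since $e_1$ and $e_2$ are both incident to~$v$ and $c$ enters~$v$ along one of them and exits along the other, the two strands meeting at the crossing are oriented---up to this simultaneous reversal---as ``one pointing toward~$v$, the other away from~$v$'', a configuration that does not depend on the particular cycle~$c$. Thus the crossing sign is dictated by the local picture of the move alone; with the chirality fixed in Figure~\ref{fig:reidemeister} it is $+1$, so that adding the crossing raises $w(D_c)$ by $1$ and removing it lowers $w(D_c)$ by $1$, i.e. it changes $w(D_c)$ by exactly~$\varepsilon$. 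This uniformity over all such~$c$ is the only genuinely delicate point; granting it, assembling the scalar identities over $c\in C$ yields the claimed formula. The factor $\tfrac12$ is then forced, being the unique choice of coefficients on $e_1,e_2,e_3$ that returns $\varepsilon$ on cycles through both $e_1$ and~$e_2$ and $0$ on the other two kinds of cycle through~$v$, and it is precisely this that makes it necessary to extend $C^*$ to half-integers.
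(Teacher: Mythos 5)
Your proof is correct and follows essentially the same route as the paper's: a cycle-by-cycle case analysis according to which of $e_1,e_2,e_3$ lie on~$c$, matched against the coefficients of $C^*\left(\tfrac{\varepsilon}{2}(e_1+e_2-e_3)\right)$. You in fact supply more detail than the paper at the one delicate point --- why the new crossing is counted with the same sign in~$D_c$ for every cycle~$c$ containing both $e_1$ and~$e_2$ --- which the paper simply asserts as the displayed identity $(\star)$.
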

 
 Note that $e_1, e_2, e_3$ are not necessarily distinct, as $G$~might have a loop at~$v$.
 
\begin{proof}
Each~$c\in C$ contains either none, or exactly two of the half-edges at~$v$, and
\[w(D'_c) - w(D_c) = \begin{cases}
	\varepsilon & \text{if $e_1, e_2 \in \E(c)$,}\\
	0 &  \text{otherwise.}
\end{cases} \tag{$\star$}\]

For the element
$u := \tfrac \varepsilon 2( e_1 +  e_2 -  e_3)$,
we have
\[ C^*(u) = \frac \varepsilon 2 \biggl( \sum_{c \ni e_1} c+ \sum_{c\ni e_2} c- \sum_{c \ni e_3}c\biggr).\]
In this sum, the cycles~$c$ containing both $e_1$~and~$e_2$ (hence not~$e_3$) have coefficient $\frac \varepsilon 2 (1+1)= \varepsilon$, those containing $e_1$~and~$e_3$ have coefficient $\frac \varepsilon 2 (1-1) = 0$, and similarly for those containing $e_2$~and~$e_3$. Obviously if $c$~contains none among $e_1, e_2, e_3$, its coefficient is also~$0$. In all cases, this is precisely the value of~$(\star)$, and so
\[w_C(D') - w_C(D) = C^*(u).\qedhere\]
\end{proof}

\begin{proof}[Proof of Theorem~\ref{thm:main}]
	By Theorem~\ref{thm:reidemeister}, $D_2$~differs from~$D_1$ by a sequence of Reidemeister moves. 
	Thus, Lemmas \ref{lem:R14}~and~\ref{lem:R5} combined show that $w_C(D_2) - w_C(D_1)$ is a sum of elements of the form $\varepsilon C^*(e) \in  C^*(\ZZ^{\E(G)})$, and of the form $C^*(\frac \varepsilon 2(e_1 + e_2 - e_3))$ with $e_1, e_2, e_3$ the edges at a vertex. 
	The condition on induced cyclic orders guarantees that, moreover, an even number of R5 moves is applied at each vertex, so the summands of the second type can be grouped into pairs associated to the  same vertex.
	
	Now, for every two elements
	\[u = \tfrac \varepsilon 2(e_1 + e_2 - e_3), \qquad u' = \tfrac {\varepsilon'} 2(e'_1 + e'_2 - e'_3)\]
	with $\varepsilon, \varepsilon'\in \{\pm1\}$ and $\{e_1, e_2, e_3\} = \{e'_1, e'_2, e'_3\}$, the sum $u+u'$ has coefficients in~$\{-1, 0, 1\}$, so $u + u' \in \ZZ^{\E(G)}$. Hence, the joint contribution of each pair has the form
	\[C^*(u + u') \in   C^*(\ZZ^{\E(G)}). \qedhere\]
\end{proof}

\begin{ex}\label{ex:mainexample}
	The diagrams for the spatial graphs~$\Gamma_1, \Gamma_2$ from Example~\ref{ex:writhetuple} have matching cyclic orders of the half-edges at every vertex. Their writhe tuples associated to the set~$C$ of $6$-cycles differ by
	\[w_C(D_2) - w_C(D_1) = (0,0,-2, 2, 0, 2).\]
	One now checks that the map~$C^*$, whose matrix is shown in Example~\ref{ex:cyclemap}, does not have this tuple in its image. A quick way to see this is by noting that all columns of that matrix have entries adding up to~$4$ (in other words, each edge of~$\K$ is in exactly~$4$ length-$6$ cycles). Yet the entry sum of~$w_C(D_2) - w_C(D_1)$ is not a multiple of~$4$. Thus, by Theorem~\ref{thm:main}, we conclude $\Gamma_1 \not \cong\Gamma_2$.
	
	We point out that the same argument could have been carried out using as~$C$ the set of $4$-cycles instead.
\end{ex}

Example~\ref{ex:mainexample} can be vastly generalized:

\begin{ex}\label{ex:moebius}
	Given $n \in \NN_{\ge1}$, the \textbf{Möbius ladder} with $n$~rungs~$\M_n$  is the $3$-regular graph with vertex set $\ZZ / {2n}$, and
	\begin{itemize}
		\item for each $k\in \ZZ/2n$, an edge~$e_k$ connecting~$k$ to~$k+1$,
		\item for each $l\in\ZZ / n$, an edge $r_l$ connecting the two vertices that have~$l$ as their mod-$n$ residue
	\end{itemize}
	(with some choice of edge orientations, which will be irrelevant). See Figure~\ref{fig:moebius}.
	
	\begin{figure}[h]
		\centering
		\def \svgwidth{\linewidth}
		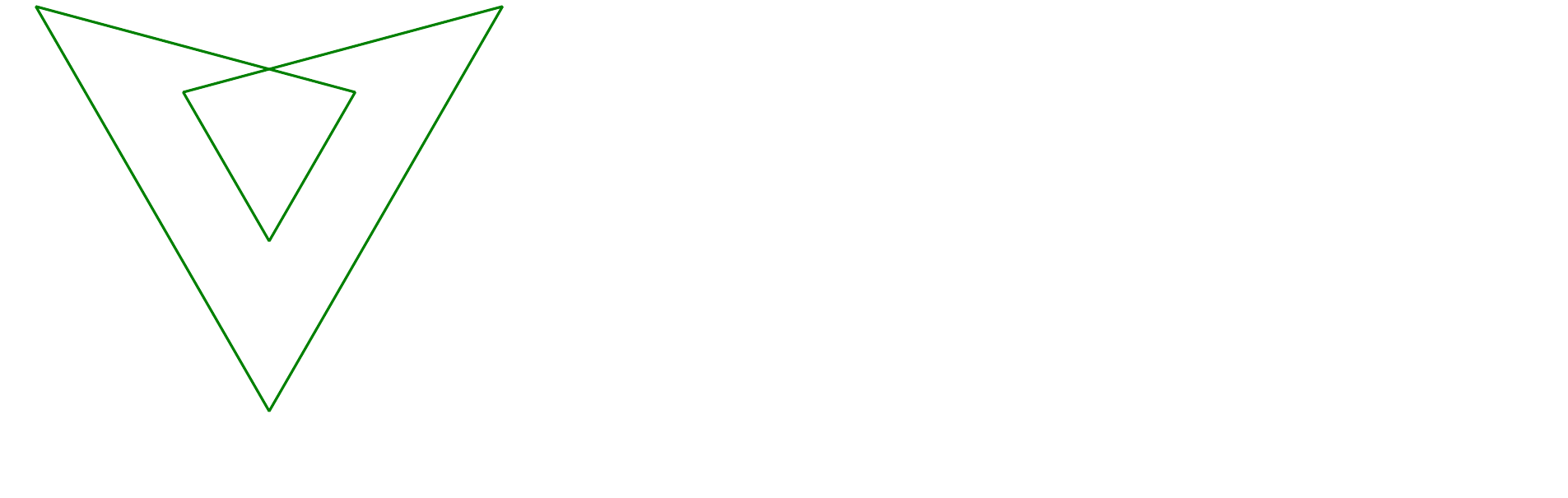
		\caption{The Möbius ladders $\M_3, \M_4$ and  $\M_5$.}
		\label{fig:moebius}
	\end{figure}
	
	For each $n\ge 1$ and odd integer~$m$, we consider the spatial graph~$\Gamma_n^m$ of type~$\M_n$ with diagram~$D_n^m$ as in Figure~\ref{fig:Gamma43}: there are exactly $|m|$~crossings, all between  $e_0$~and~$e_n$, and their sign matches the sign of~$m$ (if $e_0, e_n$ are oriented, respectively, from $0$~to~$1$ and from $n$~to~$n+1$).

	\begin{figure}[h]
		\centering
		\def \svgwidth{0.3\linewidth}
		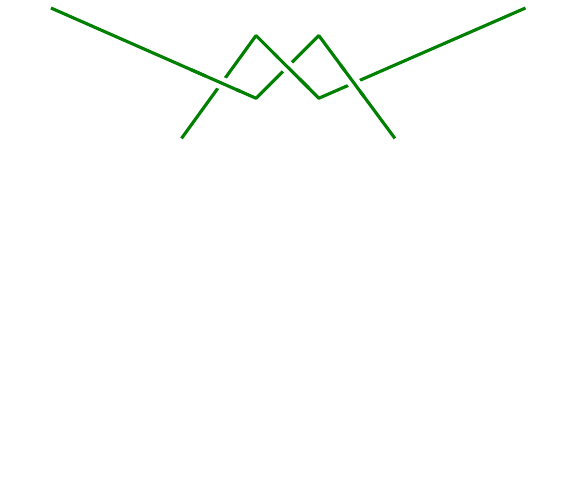
		\caption{The diagram $D_4^3$ of~$\Gamma_4^3$.}
		\label{fig:Gamma43}
	\end{figure}
	
	We will use Theorem~\ref{thm:main} to prove that if $n\ge 3$, then $\Gamma_n^m \not \cong \Gamma_n^{m'}$ whenever $m\neq m'$. (For $n=1$ or $n=2$ we recover, respectively, the theta-graph and the complete graph on 4~vertices~$\mathrm K_4$. In either case, one easily checks that for every odd $m, m'$, we have $\Gamma_n^m \cong \Gamma_n^{m'}$.) 
	
	Let us then fix $n\ge 3$ once and for all. We consider:
	\begin{itemize}
		\item the set~$C_4$ of $4$-cycles obtained by choosing $l\in \ZZ / n$, and taking the edges~$r_{l}, r_{l+1}$ alongside the two~$e_k$ with $k\equiv l \mod{n}$,
		\item the set~$C_{+}$ of $(n+1)$-cycles obtained by choosing $k\in \ZZ / 2n$ and taking the edge~$r_k$, along with $e_{k}, e_{k+1}, \ldots, e_{k+n-1}$,
		\item the $2n$-cycle~$\gamma$ comprised of the edges~$e_k$ over all $k\in \ZZ / 2n$.
	\end{itemize}
	
	We now put $C:= C_4 \cup C_+\cup \{\gamma\}$.
	For each odd~$m$, there are precisely two non-zero entries in~$w_C(D_n^m)$: the $4$-cycle containing $e_0$~and~$e_n$ has coordinate~$-m$, and $\gamma$~has coordinate~$m$. We illustrate the  system of equations relevant for comparing $\Gamma_n^m, \Gamma_n^{m'}$ in the $n=4$ case:
	
	\[\begin{array}{c|cccccccccccc|c}
		&e_0&e_1&e_2&e_3&e_4&e_5&e_6&e_7&r_0&r_1&r_2&r_3& w_C(D_n^{m'})-w_C(D_n^m)\\ \hline
		\multirow{4}*{$C_4$}&1&&&&1&&&&1&1&&& -(m'-m)\\
		&&1&&&&1&&&&1&1&&0\\
		&&&1&&&&1&&&&1&1&0\\
		&&&&1&&&&1&1&&&1&0\\ \hline
		\multirow{8}*{$C_{+}$}&1&1&1&1&&&&&1&&&&0\\
		&&1&1&1&1&&&&&1&&&0\\
		&&&1&1&1&1&&&&&1&&0\\
		&&&&1&1&1&1&&&&&1&0\\
		&&&&&1&1&1&1&1&&&&0\\
		&1&&&&&1&1&1&&1&&&0\\
		&1&1&&&&&1&1&&&1&&0\\
		&1&1&1&&&&&1&&&&1&0\\ \hline
		\gamma &1&1&1&1&1&1&1&1&&&&&m'-m
	\end{array}\]
	
	We will show that this system has no solution if $m\neq m'$, and therefore, by Theorem~\ref{thm:main}, we conclude $\Gamma_n^m \not \cong \Gamma_n^{m'}$. For each $l \in \ZZ/n$, the sum of the two ``$C_+$'' equations involving~$r_l$, minus the ``$\gamma$'' equation, shows that the $r_l$-coordinate of any solution is $-\frac{m'-m}2$. Using this information, the sum of all ``$C_4$'' equations (of which there are~$n$) tells us that the $e_k$-coordinates add up to~$(n-1)(m'-m)$.
	From the ``$\gamma$'' equation, we then deduce $(n-2)(m'-m) =0$, which, as $n\neq 2$, implies $m'=m$.
\end{ex}

\section{Additional remarks}\label{sec:extras}

\subsubsection*{3-regularity}

The $3$-regularity assumption in Theorem~\ref{thm:main} is essential, as the next example shows.

\begin{ex}
	The diagrams $D_1, D_2$ in Figure~\ref{fig:4edges} represent isotopic spatial graphs and induce the same cyclic orders on half-edges at each vertex. They are however not $3$-regular. 

	\begin{figure}[h]
		\centering
		\def \svgwidth{0.6\linewidth}
		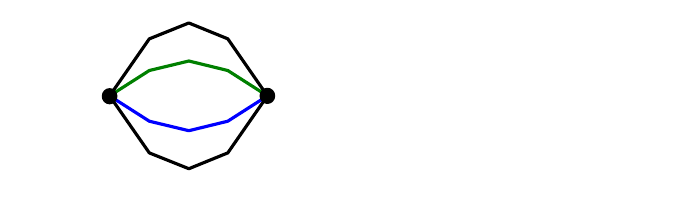

		\caption{Diagrams for two isotopic spatial graphs.
		} 
		\label{fig:4edges}
	\end{figure}
		The reader can easily check that for $C$~the set of all $2$-cycles (of which there are~$6$), we have $w_C(D_2) - w_C(D_1) \not \in C^*(\ZZ^{\E(G)})$
\end{ex}

\subsubsection*{Choosing $C$}

In applying Theorem~\ref{thm:main} to distinguish spatial graphs of type~$G$ given as diagrams $D_1, D_2$, one must settle on a set~$C$ of cycles. Note that the map~$C^*$ satisfies the following naturality property: given a subset $C'\subseteq C$, the corresponding map $(C')^*$ is the composition
\[\ZZ^{\E(G)} \xrightarrow{C^*} \ZZ^C \onto \ZZ^{C'},\] 
where the map to the right is the canonical projection. Each writhe tuple~$w_{C'}(D_i)$ is also the projection of~$w_C(D_i)$.
Thus, if $w_{C'}(D_2) - w_{C'}(D_1) \not \in (C')^*(\ZZ^{\E(G)})$, then also $w_{C}(D_2) - w_{C}(D_1) \not \in C^*(\ZZ^{\E(G)})$. In other words, the method is, unsurprisingly, more powerful if the larger set~$C$ is used.

\subsubsection*{Invisible crossing changes}

A \textbf{crossing change} on a spatial graph diagram is the operation of flipping the data at a crossing of which strand crosses over which.
The following proposition indicates a situation where Theorem~\ref{thm:main} has no chance of obstructing the existence of an isotopy between spatial graphs.

\begin{prop}[Invisible crossing changes]\label{prop:crossingchanges}
	Let $D, D'$ be diagrams of $3$-regular spatial graphs of type~$G$, and let $C$~be a set of cycles of~$G$. If $D$ and~$D'$ differ only by a crossing change between strands in the same edge, or in adjacent edges, then \[w_C(D') - w_C(D) \in C^*(\ZZ^{\E(G)}).\]
\end{prop}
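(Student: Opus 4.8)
The plan is to track how a single crossing change affects each coordinate $w(D_c)$ of the writhe tuple, and then repackage the total effect as $C^*$ applied to an explicit integer vector supported on the edge(s) involved. The starting observation is that flipping the over/under datum at a crossing reverses its sign. Writing $\sigma_c \in \{\pm 1\}$ for the sign with which this crossing is counted in the knot diagram $D_c$ of a cycle $c$ whose image contains it, the coordinate $w(D_c)$ therefore changes by $-2\sigma_c$, while $w(D_{c'})$ is unchanged for every $c'$ whose diagram does not contain the crossing. A crossing lies in $D_c$ exactly when both strands meeting there belong to edges of $c$: in the same-edge case (both strands in $e$) these are the cycles with $e \in \E(c)$, and in the adjacent case (strands in distinct edges $e, f$ sharing a vertex $v$) these are the cycles with $e, f \in \E(c)$.

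First I would dispose of the same-edge case. The crossing is then a self-crossing of $e$, so its sign does not depend on the orientation of $\Gamma(|c|)$ (reversing the orientation reverses both strands simultaneously); call this common value $\sigma$. Every affected coordinate changes by the same amount $-2\sigma$, whence
\[ w_C(D') - w_C(D) = -2\sigma \sum_{c \ni e} c = C^*(-2\sigma\, e) \in C^*(\ZZ^{\E(G)}). \]

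For the adjacent case, first note that if $e$ or $f$ is a loop then no cycle of~$G$ contains both edges, so the crossing lies in no $D_c$ and the difference is $0$; assume then that $e, f$ are not loops, so the third edge $g$ at $v$ is distinct from both. Two inputs remain. The first is that $\sigma_c$ is the same for all cycles $c$ containing both $e$ and $f$: any such cycle must traverse $e$ and $f$ consecutively through $v$ (in the $3$-regular graph a cycle through $v$ uses exactly two of the three edges there), hence induces on $\{e, f\}$ the relative orientation ``$e$ into $v$, $f$ out of $v$'' up to a global reversal, and a global reversal leaves the crossing sign unchanged; write $\sigma$ for this common value. The second is the identity
\[ \sum_{c \ni e,\, f} c = C^*\!\left(\tfrac12(e + f - g)\right), \]
which is precisely the coefficient computation in the proof of Lemma~\ref{lem:R5} (cycles using $\{e,f\}$ get coefficient $1$, those using $\{e,g\}$ or $\{f,g\}$ get $0$). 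Combining the two,
\[ w_C(D') - w_C(D) = -2\sigma\, C^*\!\left(\tfrac12(e+f-g)\right) = C^*\bigl(-\sigma(e+f-g)\bigr) \in C^*(\ZZ^{\E(G)}), \]
since $-\sigma(e+f-g)$ has integer coordinates.

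I expect the main obstacle to be the adjacent case, specifically the orientation bookkeeping showing that $\sigma_c$ is independent of $c$; this is exactly where $3$-regularity and the ``consecutive traversal through $v$'' structure are used. Once that is settled, the mechanism is transparent: the factor $\tfrac12$ in the Lemma~\ref{lem:R5} identity is cancelled by the factor $2$ produced by a sign flip, which is what lands the difference in the integer image $C^*(\ZZ^{\E(G)})$ rather than merely the half-integer image of the extended map. Conceptually, an adjacent crossing change has the same effect on the writhe tuple as two R5 moves, both adding or both removing a crossing.
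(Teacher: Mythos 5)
Your proof is correct and follows essentially the same route as the paper: the same-edge case is the computation $w_C(D')-w_C(D)=C^*(\pm 2e)$ (two R1 moves in disguise), and the adjacent case reduces to the coefficient identity behind Lemma~\ref{lem:R5}, i.e.\ to two R5 moves at the shared vertex. The only cosmetic difference is that you evaluate $C^*(-\sigma(e+f-g))$ directly where the paper passes through an auxiliary diagram obtained by two R5 moves and then invokes Theorem~\ref{thm:main}; you also spell out the orientation bookkeeping (and the loop degeneracies) that the paper leaves implicit.
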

\begin{proof}
	Suppose first that both strands at the relevant crossing~$p$ of~$D$ are in the same edge~$e$. The sign of~$p$ is the same in the diagram~$D_c$ of every $c\in C$ containing~$e$, and the crossing change alters that sign in every~$c$, adding the same value $2\varepsilon = \pm 2$ to~$w(D_c)$ for every~$c$ containing~$e$. All other cycles are  unaffected, so $w_C(D') - w_C(D) = C^*(2\varepsilon e)$. (In other words, from the perspective of the writhe tuple, this crossing change is indistinguishable from performing two R1 moves at~$e$.)
	
	If the crossing change happens between two different edges~$e_1, e_2$ meeting at~$v$, then the cycles~$c\in C$ for which $w(D_c)$ gets affected are precisely the ones containing both $e_1$~and~$e_2$. Since $e_1, e_2$ are adjacent, the sign of~$p$ in~$D_c$ is the same for every $c\in C$, and the crossing change has the effect of adding the same value $2\varepsilon = \pm2$ to $w(D_c)$ for $c$~containing $e_1$~and~$e_2$. As far as the writhe tuple is concerned, this is the same as applying two R5 moves at~$v$, or its mirrored version, between $e_1$~and~$e_2$. Hence, if $D^\circ$~is the diagram obtained from~$D$ by performing these two moves (instead of the crossing change), we have $w_C(D^\circ) = w_C(D')$. Since~$D^\circ$~and~$D$ represent isotopic spatial graphs, and they have matching cyclic orders, we conclude by Theorem~\ref{thm:main} that
	\[w_C(D') - w_C(D) = w_C(D^\circ) - w_C(D) \in C^*(\ZZ^{\E(G)}).\qedhere\]
\end{proof}

\section{Connection to the Wu invariant}\label{sec:wu}

In 1995, Taniyama gave a classification of spatial graphs without degree restrictions ``up to homology'' (a coarser equivalence than isotopy) \cite{Tan95}, by specializing an invariant due to Wu for embedded CW-complexes in Euclidean spaces \cite{Wu59}. For another exposition, see for example the paper of Flapan-Fletcher-Nikkuni \cite[Section~2]{FFN14}. In this section, we briefly present Taniyama's very explicit description of the Wu invariant for spatial graphs, and connect it to Theorem~\ref{thm:main}.

Given a graph~$G$, consider the free $\ZZ$-module~$Z(G)$ spanned by the unordered pairs $E^{e,f} = E^{f,e} := \{e,f\}$ of edges that do not share a vertex.
Then define, for each vertex~$v$ and edge $e$ not incident to~$v$, the following element of~$Z(G)$:
\[V^{v,e} := \sum_{\substack{\text{$f$~incident to~$v$}\\ \text{$f$ disjoint from $e$}}} \delta^{v,f} E^{e,f}, \quad \text {where~} \delta^{v,f} := \begin{cases}
	1& \text{if $s_G(f) = v$,}\\
	-1& \text{if $t_G(f) = v$.}
\end{cases}\]
Note that in this sum, a loop~$f$ at~$v$ is counted once with each sign, so the coefficient of~$E^{e,f}$ vanishes.
We let $B(G) \subseteq Z(G)$ be the submodule generated by all~$V^{v,e}$, and define the \textbf{linking module} of~$G$ to be
\[L(G) := Z(G) / B(G).\]

The Wu invariant of a spatial graph~$\Gamma$ is then an element of~$L(G)$, which can be read off of a diagram~$D$ as follows:  for each pair of edges $e,f$ of~$G$, let $\ell_D(e,f) = \ell_D(f,e)$ be the signed number of crossings between $e,f$ (read off using the orientations of~$e,f$). The \textbf{Wu invariant} of~$\Gamma$ is
\[\L(\Gamma) = \biggl[ \sum_{\text{$e, f$~disjoint}}  \ell_D(e,f) E^{e,f} \biggr] \in L(G).\]
Tanayama shows that this equality (which we will take as a definition) recovers Wu's invariant, and is therefore independent of the diagram~$D$ \cite[Proposition~2.1]{Tan95}.
Note that crossings in~$D$ between edges that share a vertex do not contribute to the sum, which should remind us of Proposition~\ref{prop:crossingchanges}!

For our purposes, it will be convenient to express the linking module as a modified quotient
\[L(G) = \tilde Z(G) / \tilde B(G),\]
where $\tilde Z(G)$~differs from $Z(G)$ in that it is has as a $\ZZ$-basis variables $E^{e,f}$ for \emph{all} unordered pairs of edges~$e,f$ (including when $e,f$ share a vertex, or even coincide). Accordingly, we also define~$\tilde B(G)\subseteq \tilde Z(G)$ as the submodule generated by the
\[\tilde V^{v,e} := \sum_{\text{$f$~incident to~$v$}} \delta^{v,f} E^{e,f},\]
taken over all vertices~$v$ and edges~$e$ not incident to~$v$, along with all the variables~$E^{e,f}$ whenever $e$~and~$f$ share a vertex. This results in a quotient that is canonically isomorphic to~$L(G)$ as defined above, and the Wu invariant of a spatial graph~$\Gamma$ with diagram~$D$ can be expressed as a sum over \emph{all} edges:
\[\mathcal L(\Gamma) = \bigg[ \sum_{e,f\in \E(G)} l_D(e,f)E^{e,f} \bigg].\]

From now on, assume $G$~is $3$-regular. Before presenting the main result of this section, we need to introduce the following notation:
\begin{itemize}
	\item For each $v\in \V(G)$, with incident edges $e,f,g$ (loops counted twice), define
	\[u_v := \tfrac 12 (e + f + g) \in (\tfrac 12 \ZZ)^{\E(G)}.\]
	\item Denote by~$\C_G \subseteq  (\tfrac 12 \ZZ)^{\E(G)}$ the submodule spanned by $\E(G) \cup \{u_v \mid v\in \V(G)\}$.
	Recall the observation at the heart of Lemma~\ref{lem:R5}, that when $C^*$~is regarded as the extended map $(\tfrac 12\ZZ)^{\E(G)} \to (\tfrac 12 \ZZ)^{C}$, we have
	\[C^*(u_v - g) = \sum_{c \ni e,f} c \in \ZZ^C.\tag{$\dagger$}\]
	In particular, $C^*(\C_G) \subseteq \ZZ^C$.
	\item Given edges~$e,f$ in a cycle~$c$ of~$G$, put
	\[\varepsilon^{e,f}_c := \begin{cases}
		1&\text{if $e,f$~have matching orientation along~$c$,}\\
		-1& \text{otherwise.}
	\end{cases}\]
\end{itemize}

With this notation in place, we can now clarify how the invariance of Writhe tuple cosets given by Theorem~\ref{thm:main} can be seen as a consequence of the Wu invariant.

\begin{prop}[Writhe tuple from the Wu invariant]\label{prop:wu}
	If $G$~is a $3$-regular graph, then the $\ZZ$-module homomorphism
	\begin{align*}
		\varphi\colon \tilde Z(G)& \to \ZZ^C\\
		E^{e,f} &\mapsto \sum_{c \ni e,f} \varepsilon^{e,f}_c c
	\end{align*}
	descends to a map $\phi\colon L(G) \to \ZZ^C / C^*(\C_G)$.
	
	Moreover, if $D$~is a diagram of a spatial graph~$\Gamma$ of type~$G$, then $\phi(\L(\Gamma)) = [w_C(D)]$.
\end{prop}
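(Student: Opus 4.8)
The plan is to prove the two assertions in turn: first that $\varphi$ sends $\tilde B(G)$ into $C^*(\C_G)$, so that $\phi$ descends, and then the identity $\phi(\L(\Gamma))=[w_C(D)]$. Both rest on one piece of orientation bookkeeping. For a cycle $c$ and an edge $e\in\E(c)$, let $\sigma^c_e\in\{\pm1\}$ record whether the graph-orientation of $e$ agrees ($+1$) or disagrees ($-1$) with the direction in which $e$ is traversed as one runs along~$c$. Since reversing the orientation of exactly one strand at a crossing flips its sign, one has, for all $e,f\in\E(c)$ (allowing $e=f$), the identity $\varepsilon^{e,f}_c=\sigma^c_e\sigma^c_f$. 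The computation I would isolate as a lemma is the following vertex cancellation: if $c$ passes through a vertex~$v$, it uses exactly two edges at~$v$, one oriented by the traversal into~$v$ and one out of~$v$; comparing with the sign $\delta^{v,f}$ shows $\delta^{v,f}\sigma^c_f=-1$ for the incoming edge and $+1$ for the outgoing one, so that $\sum_{f\ni v,\,f\in\E(c)}\delta^{v,f}\sigma^c_f=0$. The same comparison shows that for two \emph{adjacent} edges $e,f$ at~$v$ the value $\varepsilon^{e,f}_c=-\delta^{v,e}\delta^{v,f}$ does not depend on the cycle~$c$ containing them; write $\varepsilon^{e,f}$ for this common value.

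For well-definedness I would verify that $\varphi$ carries each generator of $\tilde B(G)$ into $C^*(\C_G)$. If $e,f$ share a vertex~$v$, let $g$ be the third edge at~$v$ (here $3$-regularity enters). Using that $\varepsilon^{e,f}_c$ is the constant $\varepsilon^{e,f}$ together with the identity $(\dagger)$, I obtain
\[\varphi(E^{e,f})=\varepsilon^{e,f}\sum_{c\ni e,f}c=\varepsilon^{e,f}\,C^*\!\left(u_v-g\right)\in C^*(\C_G),\]
since $u_v-g=\tfrac12(e+f-g)$ lies in $\C_G$; the degenerate case $e=f$ gives $\varphi(E^{e,e})=\sum_{c\ni e}c=C^*(e)$, again in $C^*(\C_G)$. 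For the relators $\tilde V^{v,e}$ with $e$ not incident to~$v$, I would regroup by cycles and apply the cancellation lemma:
\[\varphi(\tilde V^{v,e})=\sum_{f\ni v}\delta^{v,f}\sum_{c\ni e,f}\sigma^c_e\sigma^c_f\,c=\sum_{c\ni e}\sigma^c_e\Bigl(\sum_{f\ni v,\,f\in\E(c)}\delta^{v,f}\sigma^c_f\Bigr)c=0.\]
Hence $\varphi(\tilde B(G))\subseteq C^*(\C_G)$ and $\phi$ exists.

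For the second claim I would start from the representative $\sum_{e,f}\ell_D(e,f)E^{e,f}$ of $\L(\Gamma)$, apply $\varphi$, and exchange the order of summation to collect the coefficient of each cycle~$c$:
\[\varphi\Bigl(\sum_{e,f}\ell_D(e,f)E^{e,f}\Bigr)=\sum_{c}\Bigl(\sum_{e,f\in\E(c)}\varepsilon^{e,f}_c\,\ell_D(e,f)\Bigr)c.\]
It remains to identify the inner sum with $w(D_c)$. Every crossing of $D_c$ occurs between two (possibly equal) edges $e,f\in\E(c)$, and its sign computed with the traversal orientation of~$c$ equals $\sigma^c_e\sigma^c_f=\varepsilon^{e,f}_c$ times its sign computed with the graph-orientations of $e,f$; summing the latter over crossings between $e$ and $f$ yields exactly $\ell_D(e,f)$. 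Thus the inner sum equals $w(D_c)$, so $\varphi$ of the representative is $\sum_c w(D_c)c=w_C(D)$, and passing to the quotient gives $\phi(\L(\Gamma))=[w_C(D)]$.

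The main obstacle is the orientation bookkeeping of the first paragraph: getting the signs $\sigma^c_e$, $\delta^{v,f}$ and $\varepsilon^{e,f}_c$ to interlock correctly, and in particular treating self-crossings ($e=f$, where $\varepsilon^{e,e}_c=1$) and a possible loop at a vertex on the same footing as ordinary crossings. Once the relation $\varepsilon^{e,f}_c=\sigma^c_e\sigma^c_f$ and the vertex-cancellation identity are in hand, the rest is reindexing of sums and an appeal to $(\dagger)$.
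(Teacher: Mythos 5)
Your proof is correct and follows essentially the same route as the paper's: verify that the generators $E^{e,f}$ (for $e,f$ sharing a vertex) and $\tilde V^{v,e}$ of $\tilde B(G)$ land in $C^*(\C_G)$ via $(\dagger)$ and a sign cancellation at~$v$, then exchange the order of summation and identify the $c$-coordinate with $w(D_c)$ for the second claim. Your only departure is organizational: factoring $\varepsilon^{e,f}_c=\sigma^c_e\sigma^c_f$ lets you do the vertex cancellation by pulling out $\sigma^c_e$, whereas the paper pairs off summands using its identity $(\star)$, which is exactly your relation $\delta_1\sigma^c_{e_1}=-\delta_2\sigma^c_{e_2}$ in disguise.
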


\begin{proof}	
	For the first part, we need to check that $\varphi(\tilde V^{v,e}) \in C^*(\C_G)$ for every vertex~$v$ and disjoint edge~$e$, and that $\varphi(E^{e,f}) \in C^*(\mathcal C_G)$ whenever $e,f$~share a vertex.
	
	Let us first deal with the latter. For $e=f$, we see
	\[\varphi(E^{e,e}) = \sum_{c\ni e} c  = C^*(e),\]
	and if $e$ and~$f$ are distinct edges sharing a vertex~$v$, then $\varepsilon^{e,f}_c = -\delta_1\delta_2$ (independently of~$c$), so
	\[\varphi(E^{e,f})= -\delta_1\delta_2 \sum_{c\ni e,f} c \overset{(\dagger)}{=}  -\delta_1\delta_2 C^*(u_v-g),\]
	where $g$~is the third edge at~$v$.
	
	We next compute~$\varphi(\tilde V^{v,e})$ for every vertex~$v$ and disjoint edge~$e$. Denoting by~$e_1, e_2, e_3$ the edges at~$v$ appearing in the definition of $\tilde V^{v,e}$, we abbreviate $\delta^{v, e_i} =: \delta_i$. The key is the following observation:
	for every cycle~$c$ containing $e, e_1$~and~$e_2$, we have
	\[\delta_1 \varepsilon_c^{e_1, e} = - \delta_2 \varepsilon_c^{e_2, e}\tag{$\star$}\]
	(and similarly for the other pairs of indices). To see this, one should first note that, since $e_1$~and~$e_2$ are in a cycle with~$e$, they cannot be loops, so they do not appear twice in the sum defining $\tilde V^{v,e}$, and hence $e_1 \neq e_2$. One can then easily see that
	\[\varepsilon_c^{e_1, e} \varepsilon_c^{e_2,e} = \varepsilon_c^{e_1,e_2} = -\delta_1 \delta_2,\]
	from which ($\star$) follows.
	
	We now see
		\[ \varphi(\tilde V^{v,e}) = \delta_1 \varphi(E^{e_1, e}) + \delta_2 \varphi(E^{e_2, e}) + \delta_3 \varphi(E^{e_3, e}).\]
	Expanding the summand coming from~$e_1$, we obtain
	\begin{align*}
		\delta_1 \varphi(E^{e_1, e}) &= \delta_1 \sum_{c \ni e_1, e} \varepsilon_c^{e_1,e}c\\
		& = \delta_1 \sum_{c\ni e_1, e_2, e} \varepsilon_c^{e_1,e}c \, + \, \delta_1 \sum_{c\ni e_1, e_3, e} \varepsilon_c^{e_1,e}c.
	\end{align*}
	Using ($\star$), this can be rewritten as
	\[- \delta_2 \sum_{c\ni e_1, e_2, e} \varepsilon_c^{e_2,e}c \, - \, \delta_3 \sum_{c\ni e_1, e_3, e} \varepsilon_c^{e_3,e}c.\]
	The first summand thus cancels one of the summands coming from~$e_2$, and the second cancels one summand coming from $e_3$. By the same reasoning, the remaining summands coming from~$e_2, e_3$ cancel each other, and we conclude $\varphi(\tilde V^{v,e}) = 0$.
	
	For the second part, we see that $\phi(\L(\Gamma))$ is represented by

	\[\sum_{e, f\in \E(G)}  \ell_D(e,f) \varphi (E^{e,f}) = \sum_{e,f \in \E(G) }  \ell_D(e,f)  \sum_{c\ni e,f}\varepsilon_c^{e,f} c,\]
	where it is clear that for each $c\in C$, the $c$-coordinate is
	\[\sum_{c\ni e,f} \varepsilon_c^{e,f} \ell_D(e,f) = w(D_c).\qedhere\]
\end{proof}

We finish by drawing the reader's attention to the fact that the residue class of~$w_C(D)$ in Proposition~\ref{prop:wu} is with respect to~$C^*(\C_G)$, which is in general larger than the submodule~$C^*(\ZZ^{\E(G)})$ featured in our main result, Theorem~\ref{thm:main}. This might suggest that Theorem~\ref{thm:main} refines the criterion given by the Wu invariant. However, this is merely an artifact of the linking module being defined without a fixed choice of cyclic orders of half-edges at vertices, whereas our main result requires one such choice. We leave it as an exercise for the interested reader to explain how one can tweak the definition of~$L(G)$ and~$\L(\Gamma)$ to a modified linking module~$L'(G)$ and an element $\L'(\Gamma)\in L'(G)$ that is only well defined up to moves R1--R4 and an \emph{even} number of R5 moves at each vertex. One can then define a map~$\phi'\colon L'(G) \to \ZZ^C / C^*(\ZZ^{\E(G)})$ analogous to the above~$\phi$.

\printbibliography

\textsc{Stefan Friedl}, Universität Regensburg, Fakultät für Mathematik, 93053 Regensburg, Germany.
\textit{E-mail: }
\texttt{\href{mailto:stefan.friedl@mathematik.uni-regensburg.de}{stefan.friedl@mathematik.uni-regensburg.de}}\medskip

\textsc{Tejas Kalelkar}, IISER Pune, Dr. Homi Bhabha Road,
Pune 411008, India. \textit{E-mail: }
\texttt{\href{mailto:tejas@iiserpune.ac.in}{tejas@iiserpune.ac.in}}\medskip

\textsc{José Pedro Quintanilha}, Institut für Mathematik IMa, Im Neuenheimer Feld 205, 69120 Heidelberg, Germany. \textit{E-mail: }
\texttt{\href{mailto:jquintanilha@mathi.uni-heidelbeg.de}{jquintanilha@mathi.uni-heidelbeg.de}}

\end{document}